\newtheorem{theorem}{Theorem}
\newtheorem{lemma}[theorem]{Lemma}
\newtheorem{proposition}[theorem]{Proposition}
\newtheorem{remark}[theorem]{Remark}
\begin{document}

\title{Regular strongly typical blocks of $\mathcal{O}^{\mathfrak{q}}$}
\author{Anders Frisk and Volodymyr Mazorchuk}
\date{\today}
\begin{abstract}
We use the technique of Harish-Chandra bimodules to prove that 
regular strongly typical blocks of the category $\mathcal{O}$ for
the queer Lie superalgebra $\mathfrak{q}_n$ are equivalent to the
corresponding blocks of the category  $\mathcal{O}$ for the Lie
algebra $\mathfrak{gl}_n$.
\end{abstract}

\maketitle

\section{Introduction and the main result}\label{s1} 

For $n\in\mathbb{N}$ let $\mathfrak{q}_n$ denote the queer Lie
superalgebra and $\mathcal{O}^{\mathfrak{q}}$ denote the category 
$\mathcal{O}$ for $\mathfrak{q}_n$. The category 
$\mathcal{O}^{\mathfrak{q}}$ decomposes into a direct sum of blocks,
which can be {\em typical} or {\em atypical}. Atypical blocks are
very complicated and may contain infinitely many simple objects.
Typical blocks are much easier and are always equivalent to module
categories over finite-dimensional algebras. In \cite{Fr} it was shown
that the finite-dimensional algebras describing typical blocks are
always properly stratified in the sense of \cite{Dl}. Among all
typical blocks one separates {\em strongly typical} ones, which are 
described (\cite{Fr}) by quasi-hereditary algebras in the sense of
\cite{CPS,DR}. 

In \cite[4.8]{Br} it was conjectured that multiplicities
of simple highest weight supermodules in Verma supermodules for 
$\mathfrak{q}_n$ are given by Kazhdan-Lusztig combinatorics.
In \cite[3.9]{Fr} a much stronger conjecture was formulated, namely
that strongly typical blocks of $\mathcal{O}^{\mathfrak{q}}$ are
equivalent to the corresponding blocks of the category  
$\mathcal{O}$  for the Lie algebra $\mathfrak{gl}_n$. 
A strong evidence for this conjecture, established in \cite{Fr}, 
was a similarity between the quasi-hereditary structures in both
cases. Moreover,  \cite[3.9]{Fr} contains
also an explicit conjecture for the structure of all typical blocks.
The aim of this paper is to prove the conjecture from \cite[3.9]{Fr}
(and hence the conjecture from \cite[4.8]{Br} as well) for 
{\em regular} strongly typical blocks. 

There is a natural restriction functor from $\mathcal{O}^{\mathfrak{q}}$
to the category  $\mathcal{O}$  for the Lie algebra $\mathfrak{gl}_n$.
However, unlike the case of most of the other Lie superalgebras,
this restriction functor does not induce an equivalence in a 
straightforward way. The problem is that the highest weights of 
Verma supermodules over $\mathcal{O}^{\mathfrak{q}}$ are not 
one-dimensional (because the Cartan subsuperalgebra of $\mathfrak{q}$ is
not commutative). Subsequently, under restriction Verma supermodules
are not mapped to the corresponding Verma modules but rather to 
direct sums of Verma modules (see Proposition~\ref{prop2}). 
This suggests that the naive restriction functor is a direct sum of 
several copies of some ``smaller'' functor, which defines the desired 
equivalence of categories. This is exactly what we prove in this paper.

The main idea of the proof is to realize the induction functor
(the left adjoint to the restriction) as a tensor product with 
some Harish-Chandra bimodule.  This requires several definitions and 
some technical  work as we are forced to go beyond the original 
categories and instead work with the so-called thick version 
of the category $\mathcal{O}$. Unfortunately, along the way we use some
properties of Harish-Chandra bimodules, which require an additional
assumption of regularity of the blocks we work with. The main 
result of the paper is the equivalence of blocks of categories
$\mathcal{O}$  for $\mathfrak{q}_n$ and $\mathfrak{gl}_n$,
see Theorem~\ref{thmmain}. This extends earlier results of 
Penkov, Serganova and Gorelik (see \cite{PS,Pe,Go1}) to the case of the
algebra $\mathfrak{q}_n$ and verifies conjectures from \cite[3.9]{Fr}
and \cite[4.8]{Br} in our setup.

The paper is organized as follows: In Section~\ref{s2} we give all
necessary definitions and formulate our main result. In Section~\ref{s3} 
we collect auxiliary technical results about Harish-Chandra bimodules. 
The main result is proved in Section~\ref{s4}. We conclude the paper
with some remarks in Section~\ref{s5}.
\vspace{5mm}

\noindent
{\bf Acknowledgments.} For the second author the research was 
partially supported by the Swedish Research Council.

\section{Definitions, preliminaries and 
formulation of the main result}\label{s2} 

For all undefined notions we refere the reader to \cite{Fr}.
Let $\mathbb{N}$ and $\mathbb{N}_0$ denote the set of all positive 
and nonnegative integers, respectively, and fix $n\in \mathbb{N}$. 
Set $\mathbb{N}_n=\{1,\dots,n\}$. For an
algebraically closed field $\Bbbk$ of characteristic zero let
$\mathtt{g}=\mathfrak{gl}_n$ denote  the general linear Lie algebra 
of $n\times n$  matrices over $\Bbbk$. Let $\mathfrak{q}=\mathfrak{q}_n$ 
denote  the {\em queer Lie superalgebra} over $\Bbbk$, which
consists of block matrices  of the form 
\begin{displaymath}
\mathtt{M}(A,B)=\left(\begin{array}{cc}A&B\\B&A\end{array}\right),\quad
A,B\in \mathfrak{gl}_n.
\end{displaymath}
The even and the odd spaces $\mathfrak{q}_{\overline{0}}$ and
$\mathfrak{q}_{\overline{1}}$ consist of the  matrices $\mathtt{M}(A,0)$
and $\mathtt{M}(0,B)$, respectively, and we have 
$\mathfrak{q}=\mathfrak{q}_{\overline{0}}\oplus\mathfrak{q}_{\overline{1}}$.
For a homogeneous element $X\in \mathfrak{q}$ we denote by 
$\overline{X}\in\mathbb{Z}/2\mathbb{Z}$ the degree of $X$. 
Then the Lie superbracket in $\mathfrak{q}$ is given by  
$[X,Y]=XY-(-1)^{\overline{X}\overline{Y}}YX$, where $X,Y\in \mathfrak{q}$ 
are homogeneous.

For $i,j\in\mathbb{N}_n$ let $E_{ij}\in \mathfrak{gl}_n$ denote the
corresponding matrix unit. We have the {\em Cartan subsuperalgebra} 
$\mathfrak{h}$ of $\mathfrak{q}$, which is the linear span of 
$H_i=\mathtt{M}(E_{ii},0)$ and $\mathtt{M}(0,E_{ii})$, $i\in\mathbb{N}_n$. 
The superalgebra $\mathfrak{h}$ inherits from  $\mathfrak{q}$ the 
decomposition $\mathfrak{h}= 
\mathfrak{h}_{\overline{0}}\oplus\mathfrak{h}_{\overline{1}}$.

Let $\{\varepsilon_i:i\in\mathbb{N}_n\}$, denote the basis of 
$\mathfrak{h}^*_{\overline{0}}$, which is dual to the basis 
$\{H_i:i\in\mathbb{N}_n\}$ of $\mathfrak{h}_{\overline{0}}$. Then 
$\Phi=\{\varepsilon_i-\varepsilon_j:i,j\in\mathbb{N}_n,i\neq j\}$
is the {\em root system} of $\mathfrak{q}$ with the corresponding
Weyl group $W\cong \mathbf{S}_n$. We also have the standard set  
$\Phi^+=\{\varepsilon_i-\varepsilon_j:i,j\in\mathbb{N}_n,i<j\}$ 
of {\em positive} roots. The group $W$ acts on 
$\mathfrak{h}^*_{\overline{0}}$ in the usual way and via the
dot action $w\cdot\lambda=w(\lambda+\rho)-\rho$, where
$\rho$ is the half of the sum of all positive roots.

For $\alpha\in \Phi$ set
$\mathfrak{q}^{\alpha}=\{v\in \mathfrak{q}:[H,v]=\alpha(H)v
\text{ for all }H\in \mathfrak{h}_{\overline{0}}\}$. Then
$\mathfrak{q}^{\alpha}=\mathfrak{q}^{\alpha}_{\overline{0}}
\oplus\mathfrak{q}^{\alpha}_{\overline{1}}$, where both components
are one-dimensional, and we further have the decomposition 
\begin{displaymath}
\mathfrak{q}= \mathfrak{h}\oplus
\bigoplus_{\alpha\in \Phi}\mathfrak{q}^{\alpha}.
\end{displaymath}
This induces the natural triangular decomposition
$\mathfrak{q}=\mathfrak{n}_-\oplus \mathfrak{h}\oplus \mathfrak{n}_+$
with respect to our choice $\Phi^+$ of positive roots.

Elements in $\mathfrak{h}^*_{\overline{0}}$ are called {\em weights}
and are written $\lambda=(\lambda_1,\dots,\lambda_n)$ with respect to 
the basis $\{\varepsilon_i\}$. A weight $\lambda$ is called 
\begin{itemize}
\item {\em integral} provided that $\lambda_i\in\mathbb{Z}$
for any $i\in\mathbb{N}_n$;
\item {\em dominant} provided that $\lambda_i\in\mathbb{Z}$
for any $i\in\mathbb{N}_n$ and 
$\lambda_i-\lambda_{i-1}\in\mathbb{N}_0$  for any $i\in\mathbb{N}_{n-1}$;
\item {\em regular} provided that 
$\lambda_i-\lambda_j\in\mathbb{Z}\setminus\{0\}$ for any 
$i,j\in\mathbb{N}_n$, $i\neq j$;
\item {\em typical} provided that $\lambda_i+\lambda_j\neq 0$ for all
$i,j\in\mathbb{N}_n$, $i\neq j$;
\item {\em strongly typical} provided that $\lambda_i+\lambda_j\neq 0$ 
for all $i,j\in\mathbb{N}_n$.
\end{itemize}
For $\lambda,\mu\in \mathfrak{h}^*_{\overline{0}}$ we write 
$\lambda\leq \mu$ provided that $\mu-\lambda\in \mathbb{N}_0\Phi^+$.

The algebra $\mathfrak{q}_{\overline{0}}$ is identified with the 
Lie  algebra $\mathtt{g}=\mathfrak{gl}_n$ in the obvious way, 
and $\mathtt{g}$ inherits from $\mathfrak{q}$ the triangular decomposition 
$\mathtt{g}=\mathtt{n}_-\oplus \mathtt{h}\oplus\mathtt{n}_+$.
A $\mathfrak{q}$-supermodule $M$ is called a 
{\em weight} supermodule if it is a weight  (that is 
$\mathtt{h}$-diagonalizable) module with respect to $\mathtt{g}$.

We consider the category $\mathfrak{SM}$ of all 
$\mathfrak{q}$-supermodules, where all morphisms are homogeneous maps 
of degree zero. This is an abelian category with usual kernels and cokernels.
Let $\Pi:\mathfrak{SM}\to \mathfrak{SM}$ denote the autoequivalence,
which changes the parity. Let further $\mathfrak{M}$ denote the category of
all $\mathtt{g}$-modules, which is also abelian with usual kernels and
cokernels. Let $\mathrm{Res}^{\mathfrak{q}}_{\mathtt{g}}:\mathfrak{SM}\to
\mathfrak{M}$ denote the functor of restriction from $\mathfrak{q}$ 
to $\mathtt{g}$, which sends a $\mathfrak{q}$-supermodule $M=M_{\overline{0}}\oplus M_{\overline{1}}$
to the $\mathtt{g}$-module $M_{\overline{0}}$. We denote by
$\mathrm{Ind}^{\mathfrak{q}}_{\mathtt{g}}:\mathfrak{M}\to
\mathfrak{SM}$ the left adjoint of 
$\mathrm{Res}^{\mathfrak{q}}_{\mathtt{g}}$.

Let $\mathcal{O}^{\mathfrak{q}}$ and $\mathcal{O}^{\mathtt{g}}$ denote the 
BGG categories $\mathcal{O}$ for $\mathfrak{q}$ and $\mathtt{g}$, respectively
(see \cite{BGG}). These are full subcategories in the respective categories
of finitely generated (super)module, which consist of weight
(super)modules, which are locally $U(\mathfrak{n}_+)$- and 
$U(\mathtt{n}_+)$-finite, respectively.

Let $U(\mathfrak{q})$ and $U(\mathtt{g})$ denote the {\em universal enveloping
(super)algebra} of $\mathfrak{q}$ and $\mathtt{g}$, respectively.
Let, further, $Z(\mathfrak{q})$ and $Z(\mathtt{g})$ denote the 
{\em (super)center} of $U(\mathfrak{q})$ and $U(\mathtt{g})$, respectively. 
The action of the (super)center gives rise to the following 
{\em block decomposition} of $\mathcal{O}^{\mathfrak{q}}$ and 
$\mathcal{O}^{\mathtt{g}}$, indexed by {\em central characters}:
\begin{displaymath}
\mathcal{O}^{\mathfrak{q}}=\oplus_{\chi}
\mathcal{O}^{\mathfrak{q}}_{\chi},\quad\quad\quad
\mathcal{O}^{\mathtt{g}}=\oplus_{\chi}
\mathcal{O}^{\mathtt{g}}_{\chi}.
\end{displaymath}
For any $\chi$ we have the inclusion functor 
$\mathrm{incl}_{\chi}:\mathcal{O}_{\chi}\to \mathcal{O}$ and the
projection functor $\mathrm{proj}_{\chi}:\mathcal{O}\to \mathcal{O}_{\chi}$,
which are both left and right adjoint to each other.

Throughout the paper we fix a regular dominant strongly 
typical weight $\lambda$ (for $\mathfrak{q}$) and denote by 
$\hat{\lambda}$ its restriction to a $\mathtt{g}$-weight (note that
we formally have $\lambda=\hat{\lambda}$ as elements 
in $\mathfrak{h}^*_{\overline{0}}$, however, it is convenient to 
use different notation to specify the algebra we work with).
Let $\chi=\chi_{\lambda}$ and $\hat{\chi}$ be the central characters
for $\mathfrak{q}$ and $\mathtt{g}$, which correspond to 
$\lambda$ and $\hat{\lambda}$, respectively. We also denote by
$\mathfrak{m}_{\chi}$ the kernel of $\chi$ and by
$\mathtt{m}_{\chi}$ the kernel of $\hat{\chi}$. Then the block 
$\mathcal{O}^{\mathtt{g}}_{\chi}$ in indecomposable. If $n$ is odd,
the block $\mathcal{O}^{\mathfrak{q}}_{\chi}$ is indecomposable as well.
If $n$ is even, then 
\begin{displaymath}
\mathcal{O}^{\mathfrak{q}}_{\chi}\cong
\tilde{\mathcal{O}} ^{\mathfrak{q}}_{\chi}\oplus 
\Pi \tilde{\mathcal{O}} ^{\mathfrak{q}}_{\chi}
\end{displaymath}
for some indecomposable $\tilde{\mathcal{O}} ^{\mathfrak{q}}_{\chi}$.
To make our notation independent of the parity of $n$ we will just write
$\tilde{\mathcal{O}}^{\mathfrak{q}}_{\chi}$ to denote an indecomposable
direct summand of $\mathcal{O}^{\mathfrak{q}}_{\chi}$ for all $n$.
Abusing notation we will denote the inclusion and projection functors
between $\tilde{\mathcal{O}}^{\mathfrak{q}}_{\chi}$ and
$\mathcal{O}^{\mathfrak{q}}$ in the same way as above.

We have the following pair of functors:
\begin{displaymath}
\xymatrix{ 
\tilde{\mathcal{O}}^{\mathfrak{q}}_{\chi}
\ar@/^/[rrrrrr]^{\mathrm{G}:=\mathrm{proj}_{\hat{\chi}}\circ
\mathrm{Res}^{\mathfrak{q}}_{\mathtt{g}}\circ
\mathrm{incl}_{\chi}}
&&&&&&
\mathcal{O}^{\mathtt{g}}_{\hat{\chi}}
\ar@/^/[llllll]^{\mathrm{F}:=\mathrm{proj}_{\chi}
\circ\mathrm{Ind}^{\mathfrak{q}}_{\mathtt{g}}\circ
\mathrm{incl}_{\hat{\chi}}}
}
\end{displaymath}
From the definitions we have that $(\mathrm{F},\mathrm{G})$ is an 
adjoint pair of functors. The main result of this paper is the 
following theorem:

\begin{theorem}\label{thmmain}
There is a direct summand $\mathrm{F}_1$ of $\mathrm{F}$ and a direct 
summand $\mathrm{G}_1$ of $\mathrm{G}$ such that the functors 
$\mathrm{F}_1$ and $\mathrm{G}_1$ are mutually inverse equivalences 
of categories.
\end{theorem}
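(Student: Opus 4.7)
The plan is to realize the induction functor $\mathrm{Ind}^{\mathfrak{q}}_{\mathtt{g}}$ as tensoring with a $(U(\mathfrak{q}),U(\mathtt{g}))$-Harish-Chandra bimodule, isolate a distinguished direct summand of this bimodule using the Clifford multiplicity that appears in the restriction of Verma supermodules, and then use the Harish-Chandra bimodule techniques prepared in Section~\ref{s3} to show that the corresponding smaller functor is an equivalence.

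First, I would analyze $\mathrm{G}$ on Verma supermodules. By Proposition~\ref{prop2}, after projection to $\hat{\chi}$ the restriction of the Verma supermodule of highest weight $\lambda$ breaks up into $2^k$ copies of the $\mathtt{g}$-Verma module of highest weight $\hat{\lambda}$, where the multiplicity $2^k$ reflects the Clifford action of $\mathfrak{h}_{\overline{1}}$ on the highest weight space (with $k$ depending on the parity of $n$ and on the passage from $\mathcal{O}^{\mathfrak{q}}_{\chi}$ to $\tilde{\mathcal{O}}^{\mathfrak{q}}_{\chi}$). The corresponding Clifford matrix algebra sits inside $\mathrm{End}(\mathrm{G})$, and a primitive idempotent there splits off a direct summand $\mathrm{G}_1$ of $\mathrm{G}$. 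Transporting this idempotent through the adjunction $(\mathrm{F},\mathrm{G})$ produces the corresponding summand $\mathrm{F}_1$ of $\mathrm{F}$, so that $\mathrm{F}\cong \mathrm{F}_1^{\oplus 2^k}$ and $\mathrm{G}\cong \mathrm{G}_1^{\oplus 2^k}$ at the level of functors.

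Next, I would realize $\mathrm{F}_1$ as a Harish-Chandra bimodule tensor product. Writing $\mathrm{Ind}^{\mathfrak{q}}_{\mathtt{g}}(-)=U(\mathfrak{q})\otimes_{U(\mathtt{g})}(-)$, projection onto $\hat{\chi}$ on the right and $\chi$ on the left yields a $(U(\mathfrak{q}),U(\mathtt{g}))$-bimodule $\mathcal{X}$. Since $U(\mathfrak{q})$ is not locally finite under the adjoint action, this bimodule is not itself an honest Harish-Chandra bimodule, and one must pass to the thick version of the category $\mathcal{O}$ set up in Section~\ref{s3}. There, the Clifford idempotent above splits off a direct summand $\mathcal{X}_1$ of $\mathcal{X}$ which, after verification, does land in the category of Harish-Chandra bimodules, and $\mathrm{F}_1(-)$ is then naturally isomorphic to $\mathcal{X}_1\otimes_{U(\mathtt{g})}(-)$ followed by projection onto $\tilde{\mathcal{O}}^{\mathfrak{q}}_{\chi}$.

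To conclude that $\mathrm{F}_1$ and $\mathrm{G}_1$ are mutually inverse, I would invoke the regular-block theory of Harish-Chandra bimodules: for regular $\hat{\chi}$, a Harish-Chandra bimodule with generalized central character $\hat{\chi}$ on the right is determined, up to isomorphism in a suitable full subcategory, by its tensor product with the Verma module of highest weight $\hat{\lambda}$ (a Bernstein--Gelfand-type statement established in Section~\ref{s3}). Since $\mathrm{F}_1$ sends this Verma module, by construction, to a single copy of the Verma supermodule of highest weight $\lambda$, the bimodule $\mathcal{X}_1$ is identified with the one representing an equivalence of the two blocks. The adjunction then gives $\mathrm{G}_1\mathrm{F}_1\cong \mathrm{Id}$ and $\mathrm{F}_1\mathrm{G}_1\cong \mathrm{Id}$ on standards, and extension to the whole blocks is automatic because $\tilde{\mathcal{O}}^{\mathfrak{q}}_{\chi}$ and $\mathcal{O}^{\mathtt{g}}_{\hat{\chi}}$ are highest weight categories generated under extensions by their standard objects. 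The main obstacle is the middle step: lifting the Clifford idempotent from $\mathrm{End}(\mathrm{G})$ to an idempotent of the induction bimodule within the Harish-Chandra category, and then verifying that the resulting summand $\mathcal{X}_1$ really is Harish-Chandra despite the non-local-finiteness of $U(\mathfrak{q})$. Regularity of $\hat{\chi}$ is used precisely to have access to the fully faithful embedding of HC bimodules into the block, which is what pins down $\mathcal{X}_1$ and fails at singular central characters.
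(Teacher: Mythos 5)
Your overall plan — realize the induction as tensoring with a Harish--Chandra bimodule, split off a ``smaller'' summand of multiplicity $\mathbf{k}$, and then verify the resulting pair is an equivalence — is the right shape. But there is a genuine gap precisely at the step you yourself flag as the main obstacle, and your proposed fix does not work.

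You assert that ``the corresponding Clifford matrix algebra sits inside $\mathrm{End}(\mathrm{G})$'' and want to take a primitive idempotent there. This is not justified, and I do not believe it is true as stated: the Clifford action on a Verma supermodule is generated by $\mathfrak{h}_{\overline{1}}$, whose elements are odd (they map $M_{\overline{0}}$ to $M_{\overline{1}}$, not to itself) and do not centralize $\mathtt{g}$, so they produce no natural transformations of the functor $\mathrm{Res}^{\mathfrak{q}}_{\mathtt{g}}(-)=(-)_{\overline{0}}$. The paper obtains the $\mathbf{k}$-fold decomposition from a completely different source: it first shows (Lemma~\ref{lem6}) that $\mathrm{F}$ is a summand of $U(\mathfrak{q})/U(\mathfrak{q})\mathtt{m}_{\chi}^{r}\otimes_{U(\mathtt{g})}-$ for a suitable $r$, identifies this bimodule with $\mathcal{L}(M^r(\lambda),\mathrm{Ind}_{\mathtt{g}}^{\mathfrak{q}}M^r(\lambda))$ via Proposition~\ref{prop5}, and then shows (Lemma~\ref{lem7}) that $\mathrm{proj}_{\chi}\mathrm{Ind}_{\mathtt{g}}^{\mathfrak{q}}M^r(\lambda)$ splits as $\mathbf{k}$ copies of the thick Verma supermodule $\Delta(V^r(\lambda))$; the additivity of $\mathcal{L}(X,-)$ then gives $\mathrm{F}\cong\mathbf{k}\mathrm{F}_1$. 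The idempotents live in $\mathrm{End}_{\tilde{\mathcal{O}}^{\mathfrak{q}}_{\chi}}(\mathrm{proj}_{\chi}\mathrm{Ind}_{\mathtt{g}}^{\mathfrak{q}}M^r(\lambda))$, not in $\mathrm{End}(\mathrm{G})$, and the thick Verma modules $M^r(\lambda)$, $V^r(\lambda)$ are essential to this — your proposal never actually uses them.

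Your final step also invokes something that is not available: you claim Section~\ref{s3} establishes a Bernstein--Gelfand-type statement that regular Harish--Chandra bimodules are classified by their tensor product with the Verma module. Section~\ref{s3} contains only the specific computation of Proposition~\ref{prop5}; the paper explicitly relegates a full BG-style theory to Remark~\ref{rem103} as something that would be needed for singular weights but is \emph{not} developed here. The paper's actual check that $\mathrm{F}_1$, $\mathrm{G}_1$ are mutually inverse is much more elementary: from Proposition~\ref{prop2}, $\mathrm{F}_1$ and $\mathrm{G}_1$ send Vermas to Vermas with multiplicity one, so the unit and counit of the adjunction are nonzero on Verma (super)modules; since $\mathrm{End}$ of a Verma (super)module is the ground field, they are isomorphisms there; simples are quotients of Vermas, so they are isomorphisms on simples; and finite length plus the five lemma gives the result everywhere. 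No classification of HC bimodules is needed.

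=== END REVIEW ===
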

 
Before we proceed it is necessary to say why the original functors
$\mathrm{F}$ and $\mathrm{G}$ are not equivalences of categories. Both 
$\tilde{\mathcal{O}}^{\mathfrak{q}}_{\chi}$ 
and $\mathcal{O}^{\mathtt{g}}_{\hat{\chi}}$
are equivalent to categories of modules over finite-dimensional 
quasi-hereditary algebras (see \cite{BGG} for 
$\mathcal{O}^{\mathtt{g}}_{\hat{\chi}}$ and \cite{Fr} for
$\tilde{\mathcal{O}}^{\mathfrak{q}}_{\chi}$). Moreover, simple objects
in both categories are naturally indexed by elements from $W$. 
Quasi-hereditary structure on both categories comes with the collection
of standard modules. 

In  $\mathcal{O}^{\mathtt{g}}_{\hat{\chi}}$ standard modules are the 
usual Verma modules $M(\mu)$, $\mu\in W\cdot \lambda$ (observe that 
here we have the dot action of $W$).
In $\tilde{\mathcal{O}}^{\mathfrak{q}}_{\chi}$ standard modules are 
{\em Verma supermodules}. They are 
constructed as follows: For $\mu\in\mathfrak{h}^*_{\overline{0}}$
let $V(\mu)$ be a simple $\mathfrak{h}$-supermodule of
weight $\mu$. The supermodule $V(\mu)$ is unique if $n$ is odd
and satisfies $\Pi(V(\mu))\cong V(\mu)$. If $n$ is even then there
are exactly two simple $\mathfrak{h}_{\overline{0}}$-supermodules of
weight $\mu$, namely $\Pi(V(\mu))$ and $V(\mu)$ (we denote by 
$V(\mu)$ the supermodule, which will give rise to the Verma supermodule
in $\tilde{\mathcal{O}}^{\mathfrak{q}}_{\chi}$). 
We have  $V(\mu)=V(\mu)_{\overline{0}}\oplus V(\mu)_{\overline{1}}$ and 
\begin{displaymath}
\dim_{\Bbbk}(V(\mu)_{\overline{0}})=\dim_{\Bbbk}(V(\mu)_{\overline{1}}) 
=2^{\lfloor(n-1)/2\rfloor}=:\mathbf{k}.
\end{displaymath}
Letting $\mathfrak{n}_+$ act trivially on $V(\mu)$ and inducing the 
obtained module up to $U(\mathfrak{q})$ we obtain the {\em Verma supermodule}
$\Delta(V(\mu))$ (note that these  supermodules where called 
{\em Weyl modules} in \cite{Go2}). The weight $\mu$ is a highest weight of
$\Delta(V(\mu))$ and has both even and odd dimension $\mathbf{k}$. 
The standard modules in $\tilde{\mathcal{O}}^{\mathfrak{q}}_{\chi}$
are $\Delta(V(\mu))$, $\mu\in W\lambda$ (observe that here we have the
usual action of $W$).

\begin{proposition}\label{prop2}
For every $w\in W$ we have
\begin{gather*}
\mathrm{G}\Delta(V(w\lambda))\cong 
\underbrace{M(w\cdot\lambda)\oplus M(w\cdot\lambda)\oplus \dots\oplus 
M(w\cdot\lambda)}_{\mathbf{k}\text{ summands }}=:
\mathbf{k}M(w\cdot\lambda),\\
\mathrm{F}M(w\cdot\lambda)\cong 
\underbrace{\Delta(V(w\lambda))\oplus  
\dots\oplus  \Delta(V(w\lambda))}_{\mathbf{k}\text{ summands }}=:
\mathbf{k}\Delta(V(w\lambda)).
\end{gather*}
\end{proposition}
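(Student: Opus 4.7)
The plan is to establish both isomorphisms using the super-PBW theorem, character analysis and a vanishing of appropriate $\mathrm{Ext}^1$-groups between Verma modules.

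For the first formula, I start from the observation that each odd negative root vector of $\mathfrak{q}_n$ squares to zero (since $(E_{ij})^2=0$ for $i\neq j$), so super-PBW gives $U(\mathfrak{n}_-)\cong U(\mathtt{n}_-)\otimes\Lambda(\mathfrak{n}_{-,\overline{1}})$ as a super vector space. Filtering $\Delta(V(w\lambda))$ by exterior degree in the $\Lambda(\mathfrak{n}_{-,\overline{1}})$-factor produces a $\mathtt{g}$-stable filtration whose associated graded is a direct sum of $\mathtt{g}$-modules admitting Verma filtrations; combined with $\dim V(w\lambda)_{\overline{0}}=\mathbf{k}$, this yields
\[
\mathrm{ch}\bigl(\mathrm{Res}^{\mathfrak{q}}_{\mathtt{g}}\Delta(V(w\lambda))\bigr)
=\mathbf{k}\sum_{S\subseteq\Phi^+}\mathrm{ch}\bigl(M(w\lambda-\sigma(S))\bigr),\qquad \sigma(S):=\sum_{\alpha\in S}\alpha.
\]
Since $\mathrm{proj}_{\hat\chi}$ is exact and preserves Verma filtrations, only those $S$ with $w\lambda-\sigma(S)\in W\cdot\lambda$ survive. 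The combinatorial crux is that, for $\lambda$ regular dominant, the unique such subset is $\Phi^+_w:=\Phi^+\cap w(-\Phi^+)$, for which $w\lambda-\sigma(\Phi^+_w)=w\cdot\lambda$ via the identity $\rho-w\rho=\sigma(\Phi^+_w)$. Then vanishing of $\mathrm{Ext}^1_{\mathcal{O}^{\mathtt{g}}}(M(w\cdot\lambda),M(w\cdot\lambda))$ splits the resulting $\mathbf{k}$-fold Verma filtration.

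For the second formula, I argue symmetrically: PBW gives $U(\mathfrak{q})\cong\Lambda(\mathfrak{q}_{\overline{1}})\otimes U(\mathtt{g})$ as right $U(\mathtt{g})$-module, identifying $\mathrm{Ind}^{\mathfrak{q}}_{\mathtt{g}}M(w\cdot\lambda)\cong\Lambda(\mathfrak{q}_{\overline{1}})\otimes M(w\cdot\lambda)$ as a vector space and supplying a $\mathfrak{q}$-supermodule filtration whose successive subquotients admit Verma-supermodule filtrations. Because $\rho^{\mathfrak{q}}=\rho_{\overline{0}}-\rho_{\overline{1}}=0$ for $\mathfrak{q}_n$ (even and odd positive roots coincide), central characters of $\mathfrak{q}$ are indexed by ordinary $W$-orbits, so $\mathrm{proj}_\chi$ retains only Verma supermodules with highest weights in $W\lambda$; the analogous combinatorial argument then isolates $\mathbf{k}$ copies of $\Delta(V(w\lambda))$. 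Alternatively, the second formula follows from the first via the adjunction: computing $\mathrm{Hom}_{\mathfrak{q}}(\mathrm{F}M(w\cdot\lambda),\Delta(V(v\lambda)))=\mathbf{k}\dim\mathrm{Hom}_{\mathtt{g}}(M(w\cdot\lambda),M(v\cdot\lambda))$ for all $v\in W$, matching against the corresponding Hom-spaces for $\mathbf{k}\Delta(V(w\lambda))$, and invoking Yoneda.

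The main obstacle is the uniqueness of $S=\Phi^+_w$: for any $w'\neq w$ the equation $\sigma(S)-\sigma(\Phi^+_{w'})=(w-w')(\lambda+\rho)$ must admit no solution $S\subseteq\Phi^+$. The needed input is that the coordinates of $\lambda+\rho$ are pairwise at distance at least $2$ (forced by $\lambda$ being regular dominant integral), which makes $(w-w')(\lambda+\rho)$ too spread out to be matched by any difference of sums of positive roots. This is the sole place where the regularity of $\lambda$ enters essentially.
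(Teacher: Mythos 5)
Your overall strategy is the same as the paper's: apply the super-PBW theorem, filter by (thick) exterior degree to obtain Verma filtrations of $\mathrm{Res}\,\Delta(V(w\lambda))$ and $\mathrm{Ind}\,M(w\cdot\lambda)$, use combinatorics of the highest weights to show that projection to the relevant block keeps only one Verma (super)module, and then count multiplicities. The paper also uses the adjunction argument (your ``alternatively'' route) to match the two multiplicities, and then pins them both down to $\mathbf{k}$ via the uniqueness statement in its Lemma~3(ii).

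The weak point of your write-up is precisely the combinatorial step you call the ``main obstacle''. First, there is an algebra slip: from $w\lambda-\sigma(S)=w'\cdot\lambda$ one gets $\sigma(S)-\sigma(\Phi^+_{w'})=w\lambda-w'\lambda$, not $(w-w')(\lambda+\rho)$. Second, the claim that regular dominant integral $\lambda$ forces the coordinates of $\lambda+\rho$ to be pairwise at distance $\geq 2$ is simply false (they are only pairwise distinct), and even the weaker version does not by itself make $(w-w')\lambda$ ``too spread out'' to be a difference $\sigma(S)-\sigma(\Phi^+_{w'})$; one really needs a structural argument, not a size estimate. Third, you assert without proof that for $w'=w$ the subset $S$ with $\sigma(S)=\rho-w\rho$ is unique; this is exactly what guarantees multiplicity exactly $\mathbf{k}$ (not a multiple of it) and it requires an argument. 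The paper handles all three points at once in Lemma~3: setting $P_w=\{\sigma(I):I\subseteq w\Phi^+\}$, it observes the multiset identity $\rho+P_w=w\rho+P$, so that $w\cdot\lambda+P=w\lambda+P_w\subset w\lambda+\mathbb{N}_0\,w\Phi^+$, while dominance of $\lambda$ gives $w\lambda-w'\lambda\in\mathbb{N}_0\,w\Phi^+$ for all $w'$; pointedness of the cone $\mathbb{N}_0\,w\Phi^+$ then forces the intersection with $W\lambda$ (resp.\ $W\cdot\lambda$) to be the single point $w\lambda$ (resp.\ $w\cdot\lambda$), and regularity of $\lambda$ upgrades $w\lambda=w'\lambda$ to $w=w'$. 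The uniqueness of $S$ follows because $0$ occurs in the multiset $P_w$ exactly once. I would recommend replacing your ``spread-out coordinates'' heuristic with this cone argument; the rest of your proof (PBW filtration, character count, $\mathrm{Ext}^1$-splitting, adjunction/Yoneda route, and the pertinent remark that $\rho^{\mathfrak{q}}=0$ explains the $W\lambda$ versus $W\cdot\lambda$ indexing) is sound and matches the paper's reasoning.
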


\begin{proof}
We will need the following combinatorial lemma:

\begin{lemma}\label{lem3}
Let $P$ denote the multiset $\{\sum_{\alpha\in I}\alpha:I\subset \Phi^+\}$.
\begin{enumerate}[(i)]
\item\label{lem3-1} 
For any $w\in W$ we have
\begin{displaymath}
(\{w\cdot \lambda\}+P)\cap W\lambda=\{w\lambda\},\quad
(\{w\lambda\}-P)\cap W\cdot \lambda=\{w\cdot \lambda\}.
\end{displaymath}
\item\label{lem3-2}
For any $w\in W$ there is a unique element $x\in P$ such that we have 
$w\cdot\lambda+x=w\lambda$.
\end{enumerate}
\end{lemma}

\begin{proof}
The second equality of \eqref{lem3-1} follows from the first one, 
so we will prove only the first equality. Fix $w\in W$. Then we have 
\begin{displaymath}
\rho-w\rho=\frac{1}{2}\sum_{\alpha\in\Phi^+}\alpha-
\frac{1}{2}\sum_{\beta\in w\Phi^+}\beta=
\sum_{\alpha\in\Phi^+, w\alpha\not\in \Phi^+}\alpha,
\end{displaymath}
which is an element of $P$. This yields $w\lambda-w\cdot \lambda\in P$ 
and hence  $w\lambda\in \{w\cdot \lambda\}+P$.

On the other hand, as $\lambda$ is dominant integral,
for any $w'\in W$ we have 
\begin{equation}\label{eq3}
w\lambda-w'\lambda\in \mathbb{N}_0w\Phi^+.
\end{equation}
Set $P_w=\{\sum_{\alpha\in I}\alpha:I\subset w\Phi^+\}\subset 
\mathbb{N}_0w\Phi^+$. For any 
$\alpha\in\Phi^+$ we either have $\alpha\in w\Phi^+$ or
$-\alpha\in w\Phi^+$. This implies that $\rho+P_w=w\rho+P$ 
(bijection of multisets) and hence
\begin{displaymath}
w\cdot\lambda+P=w\lambda+w\rho-\rho+P=w\lambda+P_w. 
\end{displaymath}
Thus any element $v\in w\cdot\lambda+P$ satisfies 
\begin{equation}\label{eq4}
v- w\lambda\in \mathbb{N}_0w\Phi^+.
\end{equation}
The claim \eqref{lem3-1} follows comparing \eqref{eq3} and \eqref{eq4}.
The claim \eqref{lem3-2} follows from the observation that the only
common element of \eqref{eq3} and \eqref{eq4} is obtained when 
$w=w'$ and $v=0$ (that is such $v$ is unique).
\end{proof}

Fix $w\in W$. Consider the module
$N=U(\mathfrak{g})\otimes_{U(\mathtt{g})}M(w\cdot \lambda)$. 
By the PBW Theorem for Lie superalgebras (\cite{Ro}), the algebra 
$U(\mathfrak{g})$ is a free $U(\mathtt{g})$-module with basis
$\bigwedge \mathfrak{g}_{\overline{1}}$. In particular, it
follows that the module $N$ is free over $U(\mathfrak{n}_-)$
of finite rank. Hence $N$ has a filtration whose subquotients 
are Verma supermodules. As every Verma supermodule is
free over $U(\mathfrak{n}_-)$ by definition, we obtain that the
highest weight elements of subquotients in any Verma filtration of 
$N$ are images of the elements  from the space 
$\bigwedge (\mathfrak{n}_{+})_{\overline{1}}  \otimes v$, where $v$ is 
a highest weight vector of $M(w\cdot \lambda)$. Hence the corresponding
highest weights  belong to $w\cdot\lambda+P$. By Lemma~\ref{lem3}\eqref{lem3-1},
the only weight from $W\lambda$, which intersects $w\cdot\lambda+P$
is $w\lambda$. This yields that $\mathrm{F}M(w\cdot\lambda)$ is
isomorphic to the direct sum of several copies of $\Delta(V(w\lambda))$,
say $k$ copies.

Similarly, the restriction of $\Delta(V(w\lambda))$ to $U(\mathtt{g})$
is a $U(\mathtt{n}_-)$-free module of finite rank, and hence has 
a Verma filtration. The highest weight elements of subquotients of 
this Verma filtration are images of elements from
$\bigwedge (\mathfrak{n}_{-})_{\overline{1}}  \otimes v$, where 
$v$ is a highest weight vector  of $\Delta(V(w\lambda))$. 
Hence the corresponding
highest weights have form $w\lambda-P$. By Lemma~\ref{lem3}\eqref{lem3-1},
the only weight from $W\cdot \lambda$, which intersects $w\lambda-P$
is $w\cdot \lambda$. This yields that $\mathrm{G}\Delta(V(w\lambda))$ is
isomorphic to the direct sum of several copies of $M(w\cdot \lambda)$,
say $m$ copies.

By adjunction we have 
\begin{displaymath}
\mathrm{Hom}_{U(\mathfrak{g})} 
(\mathrm{F}M(w\cdot\lambda),\Delta(V(w\lambda)))=
\mathrm{Hom}_{U(\mathtt{g})}
(M(w\cdot\lambda),\mathrm{G}\Delta(V(w\lambda))),
\end{displaymath}
which yields $k=m$ as Verma (super)modules have trivial endomorphism ring.

Finally, the multiplicity $m$ equals the even multiplicity of the weight
$w\cdot \lambda$ in the space 
$\bigwedge (\mathfrak{n}_{-})_{\overline{1}}  \otimes V(w\lambda)$.
By Lemma~\ref{lem3}\eqref{lem3-2}, the multiplicity of the
weight $w\cdot \lambda-w\lambda$ in $\bigwedge
(\mathfrak{n}_{-})_{\overline{1}}$ equals $1$.
Since $\dim_{\Bbbk}(V(w\lambda)_{\overline{0}})=
\dim_{\Bbbk}(V(w\lambda)_{\overline{1}})=\mathbf{k}$, it follows that
$m=\mathbf{k}$ and the proof is  complete.
\end{proof}

From Proposition~\ref{prop2} it follows that in order to prove 
Theorem~\ref{thmmain} we have to decomposethe functors  $\mathrm{F}$
and $\mathrm{G}$ into a direct sum of $\mathbf{k}$ copies of isomorphic
functors. For this we will need the technique of Harish-Chandra bimodules.

\section{Harish-Chandra 
$U(\mathfrak{q})-U(\mathtt{g})$-bimodules}\label{s3} 

This section is inspired by and based on \cite[3.1.2]{Go1}. 
Let $M,N$ be $U(\mathtt{g})$-modules. Then the space 
$\mathrm{Hom}_{\mathbb{C}}(M,N)$ has the natural structure of
a $U(\mathtt{g})$-bimodule and contains the subbimodule 
$\mathcal{L}(M,N)$, consisting of all elements, the adjoint action of 
$\mathtt{g}$ on which is locally finite (see \cite[Kapitel~6]{Ja}). 
Similarly one defines $\mathcal{L}(M,N)$ in the case 
$M$ and $N$ are $U(\mathfrak{q})$-modules and in the case 
$M$ is a $U(\mathtt{g})$-module module and $N$ is a 
$U(\mathfrak{q})$-module. As $U(\mathfrak{q})$ is a finite extension
of $U(\mathtt{g})$ (\cite{Ro}), in all cases we can impose the 
condition that the adjoint action of the Lie algebra
$\mathtt{g}$ on elements from $\mathcal{L}(M,N)$ is locally finite.

Fix $r\in\mathbb{N}$. Let $\mathtt{m}$ denote the maximal ideal of
$U(\mathtt{h})$, generated by the elements $H-\lambda(H)$,
$H\in \mathtt{h}$. Consider the $U(\mathtt{h})$-module
$U(\mathtt{h})/\mathtt{m}^r$.  Let $\mathtt{n}_+$ act on 
$U(\mathtt{h})/\mathtt{m}^r$ trivially and construct the
{\em thick Verma module} $M^r(\lambda)$ as follows (see \cite{So}):
\begin{displaymath}
M^r(\lambda)=U(\mathtt{g}) \bigotimes_{U(\mathtt{h}\oplus \mathtt{n}_+)}
U(\mathtt{h})/\mathtt{m}^r.
\end{displaymath}
Since $\lambda$ is regular, by \cite{So} we have 
\begin{equation}\label{eq5}
\mathrm{Ann}_{U(\mathtt{g})}(M^r(\lambda))=
U(\mathtt{g})\mathtt{m}_{\chi}^r,\quad
\mathcal{L}(M^r(\lambda),M^r(\lambda))\cong
U(\mathtt{g})/U(\mathtt{g})\mathtt{m}_{\chi}^r.
\end{equation}
 
Our main result in this section is the following statement, which
generalizes \cite[3.1.2]{Go1} to our setup:

\begin{proposition}\label{prop5}
There is an isomorphism of $U(\mathfrak{q})-U(\mathtt{g})$-bimodules
as follows:
\begin{displaymath}
U(\mathfrak{q})/U(\mathfrak{q})\mathtt{m}_{\chi}^r\cong
\mathcal{L}(M^r(\lambda),
\mathrm{Ind}_{\mathtt{g}}^{\mathfrak{q}}M^r(\lambda)).
\end{displaymath}
\end{proposition}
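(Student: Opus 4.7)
The plan is to introduce the natural evaluation map
$$\Phi:U(\mathfrak{q})\to\mathcal{L}\bigl(M^r(\lambda),\mathrm{Ind}_{\mathtt{g}}^{\mathfrak{q}}M^r(\lambda)\bigr),\qquad \Phi(u)(v)=u\otimes v,$$
to show that it is a bimodule map with kernel exactly $U(\mathfrak{q})\mathtt{m}_{\chi}^{r}$, and then to deduce surjectivity of the induced map by comparing both sides as adjoint $\mathtt{g}$-modules. The bimodule compatibility is routine: left $U(\mathfrak{q})$-linearity is tautological, right $U(\mathtt{g})$-linearity follows from $uy\otimes v=u\otimes yv$ in $U(\mathfrak{q})\otimes_{U(\mathtt{g})}M^r(\lambda)$, and $\Phi(u)$ lies in $\mathcal{L}$ because $X\cdot\Phi(u)-\Phi(u)\cdot X=\Phi([X,u])$ for $X\in\mathtt{g}$ and $U(\mathfrak{q})$ is $\mathrm{ad}(\mathtt{g})$-locally finite.

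To compute $\ker\Phi$, by the super PBW theorem I would fix an ordered $\Bbbk$-basis $\{x_i\}$ of $\bigwedge\mathfrak{q}_{\overline{1}}$ realizing $U(\mathfrak{q})$ as a free right $U(\mathtt{g})$-module, write $u=\sum_i x_i y_i$, and observe that the induced $\Bbbk$-decomposition $\mathrm{Ind}_{\mathtt{g}}^{\mathfrak{q}}M^r(\lambda)=\bigoplus_i x_i\otimes M^r(\lambda)$ turns $\Phi(u)=0$ into the simultaneous conditions $y_i\in\mathrm{Ann}_{U(\mathtt{g})}(M^r(\lambda))$ for all $i$. By \eqref{eq5} this annihilator equals $U(\mathtt{g})\mathtt{m}_{\chi}^{r}$, so $u\in U(\mathfrak{q})\mathtt{m}_{\chi}^{r}$; the reverse inclusion is immediate from $a\otimes m v=0$ when $m\in\mathtt{m}_{\chi}^{r}$. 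Hence $\Phi$ descends to an injective bimodule homomorphism $\bar\Phi$ of the desired form.

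To show $\bar\Phi$ is surjective I would pass to $\mathtt{g}$-isotypic components (both sides are $\mathrm{ad}(\mathtt{g})$-locally finite, hence decompose as direct sums of finite-dimensional such pieces). A $\mathtt{g}$-equivariant symmetrization splitting of the super PBW filtration yields
$$U(\mathfrak{q})/U(\mathfrak{q})\mathtt{m}_{\chi}^{r}\cong\bigwedge\mathfrak{q}_{\overline{1}}\otimes_{\Bbbk}U(\mathtt{g})/U(\mathtt{g})\mathtt{m}_{\chi}^{r},\quad \mathrm{Ind}_{\mathtt{g}}^{\mathfrak{q}}M^r(\lambda)\cong\bigwedge\mathfrak{q}_{\overline{1}}\otimes_{\Bbbk}M^r(\lambda)$$
as adjoint $\mathtt{g}$-modules. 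Since $\bigwedge\mathfrak{q}_{\overline{1}}$ is finite-dimensional, the standard identity $\mathcal{L}(M,V\otimes N)\cong V\otimes\mathcal{L}(M,N)$ applies, and combined with \eqref{eq5} it gives the same $\mathtt{g}$-module on both sides. Matching characters of $\mathtt{g}$-isotypic components then forces the injection $\bar\Phi$ to be a bijection.

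The main obstacle I expect is carrying out the surjectivity step cleanly in the super setting: namely, constructing the two $\mathtt{g}$-equivariant symmetrization-type identifications above with the correct sign bookkeeping for $\mathfrak{q}_{\overline{1}}$, and verifying that the ``pull-out'' isomorphism $\mathcal{L}(M,V\otimes N)\cong V\otimes\mathcal{L}(M,N)$ still functions when the target is a genuine $U(\mathfrak{q})$-supermodule rather than a pure $U(\mathtt{g})$-module. Once these technicalities are in place, the argument is parallel to the one in \cite[3.1.2]{Go1}, which is the template the section is modelled on.
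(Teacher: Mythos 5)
Your proposal is correct and follows essentially the same route as the paper: construct the evaluation bimodule homomorphism, show it factors through $U(\mathfrak{q})/U(\mathfrak{q})\mathtt{m}_{\chi}^{r}$ with injective quotient via the PBW freeness of $U(\mathfrak{q})$ over $U(\mathtt{g})$ and \eqref{eq5}, and then obtain surjectivity by identifying both sides as adjoint $\mathtt{g}$-modules using the finite-dimensionality of $\bigwedge\mathfrak{q}_{\overline{1}}$, the pull-out isomorphism of \cite[6.8]{Ja}, and \eqref{eq5}. The only cosmetic difference is that the paper additionally invokes Kostant's separation theorem to present both sides as $\bigwedge\mathfrak{q}_{\overline{1}}\otimes H\otimes Z(\mathtt{g})/\mathtt{m}_{\chi}^{r}$, whereas you stop at $\bigwedge\mathfrak{q}_{\overline{1}}\otimes U(\mathtt{g})/U(\mathtt{g})\mathtt{m}_{\chi}^{r}$; your worry about the pull-out identity in the super setting is unfounded since the computation takes place entirely at the level of the adjoint $\mathtt{g}$-action, where $\mathrm{Ind}_{\mathtt{g}}^{\mathfrak{q}}M^{r}(\lambda)$ is just $\bigwedge\mathfrak{q}_{\overline{1}}\otimes M^{r}(\lambda)$.
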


\begin{proof}
Consider the homomorphism $\varphi$  of
$U(\mathfrak{q})-U(\mathtt{g})$-bimodules, defined as follows: 
$\varphi: U(\mathfrak{q})\rightarrow \mathcal{L}(M^r(\lambda),
\mathrm{Ind}_{\mathtt{g}}^{\mathfrak{q}}M^r(\lambda))$, where
$\varphi(u)(m)=u\otimes m$. By \eqref{eq5} we have
$\mathrm{Ann}_{U(\mathtt{g})}(M^r(\lambda))=
U(\mathtt{g})\mathtt{m}_{\chi}^r$ and hence the map
$\varphi$ induces a $U(\mathfrak{q})-U(\mathtt{g})$-bimodule homomorphism 
\begin{displaymath}
\overline{\varphi}:U(\mathfrak{q})/U(\mathfrak{q})\mathtt{m}_{\chi}^r
\rightarrow \mathcal{L}(M^r(\lambda),
\mathrm{Ind}_{\mathtt{g}}^{\mathfrak{q}}M^r(\lambda)).
\end{displaymath}
Since $U(\mathfrak{q})$ is free over $U(\mathtt{g})$, we conclude that
$\overline{\varphi}$ is injective.
Let us prove that $\overline{\varphi}$ is surjective. 

By the PBW theorem we have $U(\mathfrak{q})\cong 
\bigwedge \mathfrak{q}_{\overline{1}}\otimes U(\mathtt{g})$.
By Kostant separation theorem (see \cite{Ko}), there is a submodule
$H$ of the adjoint $\mathtt{g}$-module $U(\mathtt{g})$ such that 
$U(\mathtt{g})\cong H\otimes Z(\mathtt{g})$. This gives us the
following isomorphism of adjoint $\mathtt{g}$-modules:
\begin{displaymath}
U(\mathfrak{q})/U(\mathfrak{q})\mathtt{m}_{\chi}^r\cong
\bigwedge \mathfrak{q}_{\overline{1}}\otimes H\otimes 
Z(\mathtt{g})/\mathtt{m}_{\chi}^r.
\end{displaymath}

On the other hand, since $\bigwedge \mathfrak{q}_{\overline{1}}$ 
is finite-dimensional, we also have the
following isomorphism of adjoint $\mathtt{g}$-modules:
\begin{eqnarray*}
\mathcal{L}(M^r(\lambda),
\mathrm{Ind}_{\mathtt{g}}^{\mathfrak{q}}M^r(\lambda))
&\overset{\text{PBW}}{=}&
\mathcal{L}(M^r(\lambda),
\bigwedge \mathfrak{q}_{\overline{1}} \otimes M^r(\lambda))\\ 
\text{(by \cite[6.8]{Ja})} &=&
\bigwedge \mathfrak{q}_{\overline{1}}\otimes \mathcal{L}(M^r(\lambda),
M^r(\lambda))\\
\text{(by \eqref{eq5})}
&=&\bigwedge \mathfrak{q}_{\overline{1}}\otimes 
U(\mathtt{g})/U(\mathtt{g})\mathtt{m}_{\chi}^r\\
&=&\bigwedge \mathfrak{q}_{\overline{1}}\otimes  H\otimes Z(\mathtt{g})/\mathtt{m}_{\chi}^r .
\end{eqnarray*}
The claim of the proposition follows.
\end{proof}

\section{Proof of the main result}\label{s4} 

We will need the following lemma:

\begin{lemma}\label{lem6}
There exists $r\in\mathbb{N}$ such that $\mathtt{m}_{\chi}^r M=0$
for any $M\in \mathcal{O}^{\mathtt{g}}_{\hat{\chi}}$.
\end{lemma}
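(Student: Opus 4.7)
The plan is to exploit the finiteness properties of the block $\mathcal{O}^{\mathtt{g}}_{\hat{\chi}}$: since $\lambda$ is regular, the block has only finitely many simple objects and a projective generator of finite length, so a uniform power of $\mathtt{m}_{\chi}$ will annihilate everything.

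First, I would recall that $\hat{\chi}$ is the central character of $\mathtt{g}$ attached to the regular weight $\hat{\lambda}$. By the standard theory of category $\mathcal{O}^{\mathtt{g}}$ (see \cite{BGG}), the simple objects in $\mathcal{O}^{\mathtt{g}}_{\hat{\chi}}$ are precisely the modules $L(w\cdot\hat{\lambda})$, $w\in W$, so there are only finitely many isomorphism classes of simple objects in the block. For each such $L(w\cdot\hat{\lambda})$, the center $Z(\mathtt{g})$ acts through the character $\hat{\chi}$, hence $\mathtt{m}_{\chi}\cdot L(w\cdot\hat{\lambda})=0$.

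Next, let $P(w\cdot\hat{\lambda})$ denote the indecomposable projective cover of $L(w\cdot\hat{\lambda})$ in $\mathcal{O}^{\mathtt{g}}_{\hat{\chi}}$. It is a classical fact about category $\mathcal{O}$ that each $P(w\cdot\hat{\lambda})$ has finite length, say $\ell_w$. Since every composition factor of $P(w\cdot\hat{\lambda})$ has the form $L(x\cdot\hat{\lambda})$ and is therefore annihilated by $\mathtt{m}_{\chi}$, an easy induction along the composition series gives $\mathtt{m}_{\chi}^{\ell_w}\cdot P(w\cdot\hat{\lambda})=0$. Set
\begin{displaymath}
r=\max_{w\in W}\ell_w,
\end{displaymath}
which is finite because $W$ is finite. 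Then $\mathtt{m}_{\chi}^{r}$ annihilates the projective generator $P=\bigoplus_{w\in W}P(w\cdot\hat{\lambda})$ of $\mathcal{O}^{\mathtt{g}}_{\hat{\chi}}$.

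Finally, any $M\in \mathcal{O}^{\mathtt{g}}_{\hat{\chi}}$ is finitely generated, so it is a quotient of a finite direct sum of copies of $P$. Since $\mathtt{m}_{\chi}$ acts centrally, $\mathtt{m}_{\chi}^{r}$ annihilates every such quotient, yielding $\mathtt{m}_{\chi}^{r}M=0$ as required. There is no real obstacle here; the only point to double-check is the finite-length statement for indecomposable projectives in $\mathcal{O}^{\mathtt{g}}_{\hat{\chi}}$, which is classical and available in \cite{BGG}.
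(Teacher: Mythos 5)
Your proof is correct and follows essentially the same route as the paper: both arguments use that the block has a finite-length projective generator whose composition factors are annihilated by $\mathtt{m}_{\chi}$ (the paper deduces this from the Verma modules being annihilated, you deduce it directly from the central character of the simples), take $r$ to be the relevant length, and then observe that every object is a quotient of copies of the generator. The only difference is cosmetic: you work with the individual indecomposable projectives and take a maximum, while the paper works with a single projective generator $Q$.
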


\begin{proof}
The category $\mathcal{O}^{\mathtt{g}}_{\hat{\chi}}$ is equivalent to
the category of modules over some finite-dimensional algebra
(\cite{BGG}). Hence it has a projective generator $Q$ of finite length,
say $r$, and every object in $\mathcal{O}^{\mathtt{g}}_{\hat{\chi}}$
is a quotient of a direct sum of some copies of $Q$. As
$\mathtt{m}_{\chi}$ annihilates all $M(w\cdot\lambda)$, we have
that $\mathtt{m}_{\chi}$ annihilates all simple objects
and hence  that $\mathtt{m}_{\chi}^r$ annihilates $Q$. The claim follows.
\end{proof}

Now we have to define thick Verma supermodule $\Delta(V^r(\lambda))$. 
Let $V^r(\lambda)$ denote the indecomposable projective cover of 
the simple object $V(\lambda)$ in the category $\mathfrak{F}_{\lambda}^r$
of all finite-dimensional $\mathfrak{h}$-supermodules, annihilated by
$\mathtt{m}^r$. Let $\mathfrak{n}_+$ act on 
$V^r(\lambda)$ trivially and define the
{\em thick Verma supermodule} $\Delta(V^r(\lambda))$ as follows:
\begin{displaymath}
\Delta(V^r(\lambda))=U(\mathfrak{q}) \bigotimes_{U(\mathfrak{h}\oplus 
\mathfrak{n}_+)} V^r(\lambda).
\end{displaymath}
Let $\hat{\mathfrak{F}}_{\lambda}^r$ denote the category of all
finite-dimensional $\mathtt{h}$-modules, annihilated by
$\mathtt{m}^r$. Then $U(\mathtt{h})/\mathtt{m}^r$
is the unique (up to isomorphism) indecomposable projective module
in  $\hat{\mathfrak{F}}_{\lambda}^r$.

\begin{lemma}\label{lem7}
We have $\mathrm{proj}_{\chi}\circ
\mathrm{Ind}_{\mathtt{g}}^{\mathfrak{q}} M^r(\lambda)\cong
\mathbf{k} \Delta(V^r(\lambda))$,
\end{lemma}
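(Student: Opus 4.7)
The plan is to mirror the strategy of Proposition~\ref{prop2} in the thick setting. Setting $N=\mathrm{Ind}_{\mathtt{g}}^{\mathfrak{q}}M^r(\lambda)$, I first rewrite via induction-by-stages along $\mathtt{h}\oplus\mathtt{n}_+\subset\mathfrak{h}\oplus\mathfrak{n}_+\subset\mathfrak{q}$ as
\[
N\cong U(\mathfrak{q})\otimes_{U(\mathfrak{h}\oplus\mathfrak{n}_+)}A,\qquad A:=U(\mathfrak{h}\oplus\mathfrak{n}_+)\otimes_{U(\mathtt{h}\oplus\mathtt{n}_+)}U(\mathtt{h})/\mathtt{m}^r.
\]
The key reduction is to analyze $A$ as an $\mathfrak{h}\oplus\mathfrak{n}_+$-supermodule and thereby construct a thick Verma filtration of $N$.

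By super-PBW, $A\cong\bigwedge(\mathfrak{h}_{\overline{1}}\oplus(\mathfrak{n}_+)_{\overline{1}})\otimes U(\mathtt{h})/\mathtt{m}^r$ as vector spaces; the $\mathtt{h}$-weights (with multiplicities) form exactly the multiset $\lambda+P$, and the $\mu$-weight component of $A$ is annihilated by $\mathtt{m}_{\mu}^{r}$, placing it in the thick Cartan category $\mathfrak{F}^r_\mu$. One then constructs an $\mathfrak{h}\oplus\mathfrak{n}_+$-submodule filtration of $A$ whose graded pieces are multiples of the indecomposable projective covers $V^r(\mu)$ (with $\mathfrak{n}_+$ acting trivially). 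Inducing up to $\mathfrak{q}$ via $U(\mathfrak{q})\otimes_{U(\mathfrak{h}\oplus\mathfrak{n}_+)}(-)$ turns this into a filtration of $N$ whose subquotients are thick Verma supermodules $\Delta(V^r(\mu))$ for $\mu\in\lambda+P$.

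Applying $\mathrm{proj}_\chi$, Lemma~\ref{lem3}\eqref{lem3-1} with $w=e$ shows that $\lambda$ is the unique element of $\lambda+P$ lying in $W\lambda$, so by strong typicality and regularity only the subquotients $\Delta(V^r(\lambda))$ survive. The resulting filtration splits---the decomposition being inherited from that of $A$ itself---and gives $\mathrm{proj}_\chi N\cong m\Delta(V^r(\lambda))$ for some $m\in\mathbb{N}$. The multiplicity is determined by a Hom-counting argument: adjunction combined with the universal property of $M^r(\lambda)$ yields
\[
\dim\mathrm{Hom}_{\mathfrak{q}}(N,\Delta(V^r(\lambda)))=\dim\{v\in\Delta(V^r(\lambda))_{\overline{0}}:\mathtt{n}_+v=\mathtt{m}^rv=0\}=\dim V^r(\lambda)_{\overline{0}},
\]
while $\dim\mathrm{End}_{\mathfrak{q}}(\Delta(V^r(\lambda)))=\dim\mathrm{End}_{\mathfrak{h}}(V^r(\lambda))$; the ratio of these two quantities equals $\mathbf{k}$ (consistent at $r=1$ with Proposition~\ref{prop2}, and extending to all $r$ via the structure of $V^r(\lambda)$ as the projective cover of $V(\lambda)$ in $\mathfrak{F}^r_\lambda$), giving $m=\mathbf{k}$.

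The main obstacle is identifying the graded pieces of $A$ at each weight $\mu\in\lambda+P$ with the indecomposable projective covers $V^r(\mu)$ (rather than some less-structured object of $\mathfrak{F}^r_\mu$). This requires carefully exploiting the Clifford-like relations $[\mathfrak{h}_{\overline{1}},\mathfrak{h}_{\overline{1}}]=\mathfrak{h}_{\overline{0}}$, which promote the $\mathtt{h}$-module $U(\mathtt{h})/\mathtt{m}^r$ to the super projective cover, and ensuring that this Clifford twist is compatible with the subsequent induction from $\mathfrak{h}\oplus\mathfrak{n}_+$ to $\mathfrak{q}$.
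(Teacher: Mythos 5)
Your proposal runs in the opposite direction from the paper's proof, and it contains two genuine gaps that you yourself partly flag.

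The paper's argument uses the $(\mathrm{Ind},\mathrm{Res})$ adjunction to flip the problem: instead of analyzing $\mathrm{Ind}_{\mathtt{g}}^{\mathfrak{q}}M^r(\lambda)$ directly, it reduces the claim to showing that the kernel of $\mathtt{m}_\chi^r$ on $\Delta(V^r(\lambda))_{\overline{0}}$ is $\mathbf{k}M^r(\lambda)$. The technical heart is then Lemma~\ref{lem8}: $\mathrm{Res}^{\mathfrak{h}}_{\mathtt{h}}V^r(\lambda)_{\overline{0}}\cong\mathbf{k}\,U(\mathtt{h})/\mathtt{m}^r$. That lemma is clean because $\mathrm{Res}^{\mathfrak{h}}_{\mathtt{h}}$ is left adjoint to the exact coinduction, hence preserves projectivity, so the restriction is automatically a multiple of $U(\mathtt{h})/\mathtt{m}^r$; the multiplicity $\mathbf{k}$ drops out of a short Hom-count via adjunction and PBW. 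After that, the weight analysis is literally the one from Proposition~\ref{prop2}.

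You instead stay on the $\mathrm{Ind}$ side, rewrite $N=\mathrm{Ind}_{\mathtt{g}}^{\mathfrak{q}}M^r(\lambda)$ by transitivity through $\mathfrak{h}\oplus\mathfrak{n}_+$, and try to filter $A=\mathrm{Ind}^{\mathfrak{h}\oplus\mathfrak{n}_+}_{\mathtt{h}\oplus\mathtt{n}_+}U(\mathtt{h})/\mathtt{m}^r$. The first gap is precisely the one you call ``the main obstacle'': you need a filtration of $A$ by $\mathfrak{h}\oplus\mathfrak{n}_+$-submodules whose subquotients are the projective covers $V^r(\mu)$ \emph{with trivial} $\mathfrak{n}_+$\emph{-action}. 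Neither point is established. The $\mathfrak{n}_+$-action on $A$ is genuinely nontrivial (it comes from $(\mathfrak{n}_+)_{\overline{1}}$ and the adjoint action), so a weight-space filtration of $A$ is not $\mathfrak{n}_+$-stable with trivial subquotient action; and even as a pure $\mathfrak{h}$-module statement, the identification of the graded pieces as the \emph{projective covers} $V^r(\mu)$ (rather than arbitrary objects of $\mathfrak{F}^r_\mu$) is exactly the content that must be proved, not assumed. The natural way to get projectivity would be to note that $\mathrm{Ind}^{\mathfrak{h}}_{\mathtt{h}}$ preserves projectives, but that argument lives one algebra down and does not immediately control the $\mathfrak{n}_+$-structure of $A$; this is exactly the step the paper sidesteps by flipping to the $\mathrm{Res}$ side, where $V^r(\lambda)$ is given and one only restricts it.

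The second gap is the final multiplicity computation. You reduce to the ratio $\dim V^r(\lambda)_{\overline 0}/\dim\mathrm{End}_{\mathfrak{h}}(V^r(\lambda))$ and assert it equals $\mathbf{k}$ for all $r$ by appeal to ``the structure of $V^r(\lambda)$'' and consistency at $r=1$. Neither numerator nor denominator is computed for general $r$, so this is an unproved assertion, not a proof. The paper's Lemma~\ref{lem8} computes $\dim V^r(\lambda)_{\overline 0}=\mathbf{k}\cdot\dim U(\mathtt{h})/\mathtt{m}^r$ cleanly and avoids having to control $\mathrm{End}_{\mathfrak{h}}(V^r(\lambda))$ at all. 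If you want to rescue your route, you would need to prove the two claims you currently assert: (a) $A$ has the stated $\mathfrak{h}\oplus\mathfrak{n}_+$-filtration, and (b) the endomorphism ring count for all $r$; and in practice (a) will lead you back to a statement equivalent to Lemma~\ref{lem8}.
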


\begin{proof}
By adjunction it is enough to show that the kernel of 
$\mathtt{m}_{\chi}^r$ on the $U(\mathtt{g})$-module 
$\Delta(V^r(\lambda))_{\overline{0}}$ is isomorphic to 
$\mathbf{k} M^r(\lambda)$. 

\begin{lemma}\label{lem8}
We have 
$\mathrm{Res}^{\mathfrak{h}}_{\mathtt{h}}
V^r(\lambda)_{\overline{0}}\cong 
\mathbf{k}\, U(\mathtt{h})/\mathtt{m}^r$.
\end{lemma}

\begin{proof}
The restriction functor is left adjoint to the coinduction functor
$\mathrm{Coind}^{\mathfrak{h}}_{\mathtt{h}}\cong
\Pi^n\mathrm{Ind}^{\mathfrak{h}}_{\mathtt{h}}$,
the latter being exact (\cite[Proposition~22]{Fr}). Hence
$\mathrm{Res}^{\mathfrak{h}}_{\mathtt{h}}$ maps $V^r(\lambda)$ 
to a projective module in $\hat{\mathfrak{F}}_{\lambda}^r$, that is 
$\mathrm{Res}^{\mathfrak{h}}_{\mathtt{h}}
V^r(\lambda)_{\overline{0}}\cong k\, U(\mathtt{h})/\mathtt{m}^r$.
Let $\Bbbk_{\lambda}$ denote the simple object in 
$\hat{\mathfrak{F}}_{\lambda}^r$. We have 
\begin{eqnarray*}
k&=&\dim\mathrm{Hom}_{\hat{\mathfrak{F}}_{\lambda}^r} 
(\mathrm{Res}^{\mathfrak{h}}_{\mathtt{h}}
V^r(\lambda)_{\overline{0}},\Bbbk_{\lambda} )\\
\text{(by adjunction)}&=&\dim\mathrm{Hom}_{\mathfrak{F}_{\lambda}^r} 
(V^r(\lambda),
\mathrm{Coind}^{\mathfrak{h}}_{\mathtt{h}}\Bbbk_{\lambda} )\\
\text{(by \cite[Proposition~22]{Fr})}
&=&\dim\mathrm{Hom}_{\mathfrak{F}_{\lambda}^r} 
(V^r(\lambda),
\Pi^n\mathrm{Ind}^{\mathfrak{h}}_{\mathtt{h}}\Bbbk_{\lambda} )\\
\text{(by projectivity)}&=&
\mathrm{length}(\Pi^n\mathrm{Ind}^{\mathfrak{h}}_{\mathtt{h}}\Bbbk_{\lambda})\\
\text{(by PBW)}&=&\mathbf{k}.
\end{eqnarray*}
\end{proof}

The claim of Lemma~\ref{lem7} follows from Lemma~\ref{lem8} by the
same arguments as in the proof of Proposition~\ref{prop2}.
\end{proof}

\begin{proof}[Proof of Theorem~\ref{thmmain}.]
Choose $r$ as given by Lemma~\ref{lem6}. Then the functor 
$\mathrm{F}$ is a direct summand (defined by the projection on 
$\tilde{\mathcal{O}}^{\mathfrak{q}}_{\chi}$) of the functor 
\begin{displaymath}
U(\mathfrak{q})/U(\mathfrak{q})\mathtt{m}_{\chi}^r
\bigotimes_{U(\mathtt{g})}{}_-.
\end{displaymath}
By Proposition~\ref{prop5}, the latter functor is isomorphic to the 
functor
\begin{displaymath}
\mathcal{L}(M^r(\lambda),
\mathrm{Ind}_{\mathtt{g}}^{\mathfrak{q}}M^r(\lambda))
\bigotimes_{U(\mathtt{g})}{}_-. 
\end{displaymath}
 
By Lemma~\ref{lem7} we have that the module 
$\mathrm{proj}_{\chi}\circ
\mathrm{Ind}_{\mathtt{g}}^{\mathfrak{q}}M^r(\lambda)$
decomposes into a direct sum of $\mathbf{k}$ copies of 
$\Delta(V^r(\lambda))$. Hence the additivity of the functor 
$\mathcal{L}(X,{}_-)$ implies that the functor $\mathrm{F}$ decomposes
into a direct sum of $\mathbf{k}$ copies of some functor 
$\mathrm{F}_1$. By adjunction, the adjoint $\mathrm{G}$ decomposes
into a direct sum of $\mathbf{k}$ copies of some functor 
$\mathrm{G}_1$ such that $(\mathrm{F}_1,\mathrm{G}_1)$ forms an
adjoint pair.

From Proposition~\ref{prop2} we have 
\begin{displaymath}
\mathrm{F}_1 M(w\cdot\lambda)\cong \Delta(V(w\lambda)),\quad
\mathrm{G}_1 \Delta(V(w\lambda))\cong M(w\cdot\lambda)
\end{displaymath}
for all $w\in W$. As all Verma (super)modules are not annihilated 
by $\mathrm{F}_1$ and $\mathrm{G}_1$, respectively, it follows that 
the adjunction morphisms 
$\mathrm{Id}_{\mathcal{O}^{\mathtt{g}}_{\hat{\chi}}}
\rightarrow \mathrm{G}_1\mathrm{F}_1$ and
$\mathrm{F}_1\mathrm{G}_1\rightarrow
\mathrm{Id}_{\tilde{\mathcal{O}}^{\mathfrak{q}}_{\chi}}$ are 
nonzero. As the endomorphism ring of a Verma (super)module is trivial,
it follows that these adjunction morphisms are isomorphisms.

As any simple object in both $\mathcal{O}^{\mathtt{g}}_{\hat{\chi}}$ and 
$\tilde{\mathcal{O}}^{\mathfrak{q}}_{\chi}$ 
is a unique quotient of some Verma (super)module,
we conclude that the adjunction morphisms are 
isomorphisms, when evaluated on all simple objects. As any object in 
$\mathcal{O}^{\mathtt{g}}_{\hat{\chi}}$ and 
$\tilde{\mathcal{O}}^{\mathfrak{q}}_{\chi}$ has finite length,
a standard induction on the length implies that the 
adjunction morphisms are isomorphisms of functors. This completes
the proof.
\end{proof}

\section{Concluding remarks}\label{s5} 

\begin{remark}\label{rem101}
{\rm
The main result transfers in a straightforward way to all other 
$Q$-type Lie superalgebras (as in \cite{Go2}).
}
\end{remark}

\begin{remark}\label{rem102}
{\rm
The main result easily generalizes to the nonintegral 
(regular strongly typical) case. We decided to work with the 
integral case to avoid complicated notation, which would heavily
decrease the readability of the paper.
}
\end{remark}

\begin{remark}\label{rem103}
{\rm
To extend the main result to singular weights one has to develop the
theory of Harish-Chandra bimodules for superalgebras in the similar
way as done for Lie algebras in \cite{BG}.
}
\end{remark}

\begin{remark}\label{rem104}
{\rm
Blocks which are typical but not strongly typical are described by 
properly stratified rather than by quasi-hereditary algebras
(see \cite{Fr}). Hence the results of this paper do not extend to
such blocks. In \cite[3.9]{Fr} it is conjectures that such blocks are
tensor products of strongly typical blocks with 
$\Bbbk[x]/(x^2)$-mod.
}
\end{remark}

\vspace{1cm}

\noindent
Department of Mathematics, Uppsala University, Box 480, SE-75106,
Uppsala, SWEDEN 
\vspace{2mm}

\noindent
{\tt frisken\symbol{64}math.uu.se}\\
{\tt mazor\symbol{64}math.uu.se}

\end{document}